\documentclass[12pt,A4]{aims}
\usepackage{graphicx}
\usepackage{amssymb}
\usepackage{epstopdf}
\usepackage{amsmath,color}
\usepackage{comment}
\DeclareGraphicsRule{.tif}{png}{.png}{`convert #1 `dirname #1`/`basename #1 .tif`.png}
\def\OO{\mathcal O}
\def\EE{\mathcal E}
\def\ie{{\it i.e., }}
\def\eref#1{Eq.(\ref{#1})}
\let\eqref=\eref
\def\fref#1{Fig.~\ref{#1}}
\def\bra#1{\langle #1|}\def\d{{\rm d}}
\usepackage{mhequ}
\newtheorem{theorem}{Theorem}[section]

\theoremstyle{definition}

\newtheorem{remark}{Remark}

\let\epsilon=\varepsilon
\let\theta=\vartheta
\let\phi=\varphi
\def\ipt{e^{i(t\phi+\theta)}}

\title[Metastable States]{Breathers as Metastable States for the Discrete NLS equation}
\author{Jean-Pierre Eckmann and C. Eugene Wayne}
\subjclass{Primary: xxxx; Secondary: xxxx.}
\keywords{5 keywords}
 \email{jean-pierre.eckmann2unige.ch}
 \email{cew@math.bu.edu}
\thanks{The first author is supported ERC, ``Bridges'', the second is supported in part by NSF grant DMS-1311553}
\thanks{$^*$ Corresponding author: C.E. Wayne}
\begin{document}
\maketitle

\centerline{\scshape Jean-Pierre Eckmann}
\medskip
{\footnotesize
 \centerline{D\'epartement de Physique Th\'eorique}
   \centerline{and Section de Math\'ematiques}
   \centerline{Universit\'e de Gen\`eve}
   \centerline{1211 Geneva 4, Switzerland}
} 
\bigskip
\centerline{\scshape C. Eugene Wayne$^*$}
\medskip
{\footnotesize
 \centerline{Department of Mathematics and Statistics}
   \centerline{Boston University}
   \centerline{Boston, MA 02215, USA}
} 

\bigskip

\centerline{(Communicated by the associate editor name)}

\begin{abstract}
We study metastable motions in weakly damped Hamiltonian
systems.  These are believed to inhibit the transport of energy through Hamiltonian, or
nearly Hamiltonian, systems with many degrees of freedom. We 
investigate this question in a very simple model in
which the breather solutions that are thought to be
responsible for the metastable states can be computed perturbatively to an arbitrary
order.  Then, using a modulation hypothesis,   we
derive estimates for the rate at which the system drifts along this
manifold of periodic orbits and verify the optimality of our estimates
numerically. 
\end{abstract}
\emph{Dedicated to Rafael de la Llave with admiration and affection on
  his $60^{th}$ birthday.}
  
\section{Introduction}\label{sec:intro}

We are interested in the transport of energy through Hamiltonian, or
nearly Hamiltonian, systems with many degrees of freedom.  For
example, this type of transport is important for understanding heat
flow in lattice systems  \cite{politi:1997}. 
In \cite{cuneo:2017}, the authors find that in a system of $N$
rotators, coupled to their nearest neighbors, there are regions of
phase space in which the transport is very slow and they give
theoretical and numerical estimates indicating that the 
slow transport is due to the metastability of periodic breathers in
the system.   We note, that breathers and the transport of energy
in Hamiltonian lattices, have also been among the topics of R. de la Llave's
research (\cite{haro:2000}, \cite{fontich:2015}).

Here, we investigate this question further in a very simple model in
which the breathers can be computed perturbatively to an arbitrary
order.  Then, using a modulation hypothesis see \eqref{eq:modulation_hyp} we
derive estimates for the rate at which the system drifts along this
manifold of periodic orbits and verify the optimality of our estimates
numerically.

We now recall in more detail the situation considered in
\cite{cuneo:2017}.  The authors start from a Hamiltonian system of $N$
rotators coupled to their nearest neighbors.  They choose initial
conditions in which almost all of the energy is the system is
concentrated on one of the rotators at the end of the system, and they
impose a very weak damping at the opposite end.  Since it is easy to
prove that the energy eventually tends to zero, it is forced to flow
from the end of the system where it is initially located to the end
where it dissipates.  The authors find that the dissipation rate
is very slow and their numerics seem to indicate that for a very long
period, the solution remains near an approximately periodic, spatially
localized,  state reminiscent of the discrete breathers often found in
Hamiltonian lattices \cite{MacKay:1994}, \cite{Flach:1998},
\cite{Aubry:2006}. 

In the present work we look again at this problem in a finite, discrete nonlinear Schr\"odinger equation:

\begin{equ}\label{e:dNLS}
-i \frac{\partial u_j}{\partial \tau} = - (\Delta u)_j + |u_j|^2 u_j\ , \ j=1,2, \dots , N\ .
\end{equ}
Here $(\Delta u)_j= u_{j-1}-2 u_j +u_{j+1}$, with obvious
modifications for $j=1$ or $N$.
We will focus primarily on the case $N=3$, for simplicity, but in principle, our
methods apply to systems with arbitrarily many degrees of freedom, and we plan
to return to the consideration of the general case in a future work.

We will choose initial conditions for this system in which essentially all of the 
energy is in mode $u_1$, and will add a weak dissipative term to the last 
mode as in \cite{cuneo:2017} by adding to \eref{e:dNLS} a term of the form
\begin{equ}
i \gamma \delta_{N,j} u_j~,
\end{equ}
\ie we add dissipation to position $N$, at the other end from the
energetic mode.
Eventually, this will lead to the energy of the system tending to zero, but we are interested in what
happens on intermediate time scales.

If our initial conditions are chosen so that $u_1(0) = L$, and all other $u_j(0) = 0$, then we expect
that at least initially, the coupling terms between the various modes will play only a small
role in the evolution and the system will be largely dominated by the equation for $u_1$:
\begin{equ}
-i \frac{\d u_1}{\d\tau} = |u_1|^2 u_1\ ,
\end{equ}
with solution $u_1(\tau ) = e^{i L^2 \tau} L$---\ie we have a very fast rotation with large amplitude.  With this
in mind, we introduce a rescaled dependent variable and rewrite the equation in a rotating coordinate
frame by setting:
\begin{equ}\label{e:wide}
u_j(\tau) = L e^{i L^2 \tau} \tilde{w}_j(\tau)\ .
\end{equ}
Then $\tilde{w}_j$ satisfies
\begin{equ}
L^2 \tilde{w}_j  -i \frac{\partial \tilde{w}_j  }{\partial \tau}
 = - (\Delta \tilde{w})_j + L^2 |\tilde{w}_j |^2 \tilde{w}_j\ .
\end{equ}
We now add dissipation by adding a term which acts on the last
variable, with $\gamma\ge0$,
\begin{equ}
L^2 \tilde{w}_j  -i \frac{\partial \tilde{w}_j  }{\partial \tau}
 = - (\Delta \tilde{w})_j + L^2 |\tilde{w}_j |^2 \tilde{w}_j +i \gamma
\delta_{N,j} \tilde{w}_j\ .
\end{equ}
Rearranging, and dividing by $L^2$ gives
\begin{equ}
- i \frac{1}{L^2} \frac{\partial {\tilde{w}}_j }{\partial \tau}
= - \frac{1}{L^2} (\Delta \tilde{w})_j - \tilde{w}_j + |\tilde{w}_j |^2 \tilde{w}_j
+ i \frac{\gamma}{L^2} 
\delta_{N,j} \tilde{w}_j
\  .
\end{equ}
Finally, we define $\epsilon = L^{-2}$, and rescale time so that $\tau = \epsilon t$.  Setting
$w(t) = \tilde{w}(\tau)$,  we arrive finally
at 
\begin{equ}\label{e:main}
-i \frac{\partial w_j}{\partial t} = -\epsilon (\Delta w)_j - w_j + |w_j|^2 w_j+ i \epsilon \gamma\delta_{N,j} w_j~.
\end{equ}

We will study \eref{e:main} for the remainder of this paper.   We will also sometimes rewrite
this equation in the equivalent real form by defining $w_j = p_j + i q_j$, which yields the system
of equations, for $j=1,\dots,N$:
\begin{equa}[e:main_real]
  \dot q_j &= -\epsilon (\Delta p)_j
  -p_j+(q_j^2+p_j^2)p_j-\delta_ {j,N}\gamma\epsilon q_N~,\\
  \dot p_j &= \hphantom{-}\epsilon (\Delta q)_j +q_j-(q_j^2+p_j^2)q_j-\delta_ {j,N}\gamma\epsilon p_N~~.\\
\end{equa}
Note that if  $\gamma=0$, this is a Hamiltonian system with:
\begin{equa}[e:hamiltonian]
 H&=\frac{\epsilon }{2} \sum_{j<N} \left((p_j-p_{j+1})^2+(q_j-q_{j+1})^2\right)\\ &~~~-
\sum_{j=1}^N \left(\frac{1}{2}(p_j^2+q_j^2)-\frac{1}{4} (p_j^2+q_j^2)^2\right)~.
\end{equa}

\section{The existence of breathers}\label{s:breathers}

Based on the rescaling and rewriting the equation in a rotating frame, we might expect that 
when $\gamma = 0$, that is, when we have no damping, \eref{e:main}
has a fixed point, close to $w_1 = 1$, and we will prove that this is the case below.  Furthermore,
this fixed point is spatially localized---\ie its components $w_j$ decay rapidly with $j$.  Thus, it
is an example of the breather solutions often studied in Hamiltonian lattices.  Breathers often come in
families and that is the case here too---we will prove that when $\gamma=0$
\eref{e:main} has a family of periodic orbits
with frequency close to zero which are also spatially localized.

\begin{theorem}\label{th:fixed} Suppose that the damping coefficient
  $\gamma$ equals $ 0$ in
\eref{e:main}.  There exists $\epsilon_0 > 0$ such that for all $0 < \epsilon < \epsilon_0$,
the equation \eref{e:main} has a unique real valued fixed point with $w_1^* = 1 + {\OO}(\epsilon)$, 
and $w_j^* = {\OO}(\epsilon^{j-1})$, for $j=2, \dots , N$.
\end{theorem}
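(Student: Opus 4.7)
The plan is to recast the fixed point condition as an algebraic system, perform a geometric rescaling that captures the expected decay rate, and then apply the implicit function theorem at $\epsilon=0$.

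First, I would set $\partial_t w=0$ in \eqref{e:main} with $\gamma=0$ and impose reality of $w_j$. This reduces the fixed point condition to the purely algebraic system
\begin{equ}\label{e:fp}
w_j(w_j^2-1) \;=\; \epsilon\,(\Delta w)_j, \qquad j=1,\dots,N,
\end{equ}
with the appropriate modification of $\Delta$ at $j=1$ and $j=N$. When $\epsilon=0$ the equations decouple and each $w_j\in\{0,\pm1\}$, so the natural seed is $w^{(0)}=(1,0,\dots,0)$.

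Next, motivated by the asserted decay $w_j^\ast=\OO(\epsilon^{j-1})$, I would introduce the rescaling $w_j=\epsilon^{j-1}v_j$. Substituting into \eqref{e:fp} and dividing the $j$-th equation by $\epsilon^{j-1}$ produces a smooth system $\Phi_j(v,\epsilon)=0$ whose $\epsilon\to0$ limit is the triangular recurrence
\begin{equ}
v_1(v_1^2-1)\;=\;0,\qquad v_j+v_{j-1}\;=\;0\quad(j\ge 2),
\end{equ}
with the relevant branch $v^{(0)}=(1,-1,1,\dots,(-1)^{N-1})$. I would then apply the implicit function theorem at $(v,\epsilon)=(v^{(0)},0)$: the Jacobian $D_v\Phi|_{(v^{(0)},0)}$ is lower bidiagonal with diagonal entries $(3(v_1^{(0)})^2-1,-1,\dots,-1)=(2,-1,\dots,-1)$ and sub-diagonal $(-1,\dots,-1)$, hence invertible. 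This produces a unique smooth branch $\epsilon\mapsto v(\epsilon)$ of real solutions near $v^{(0)}$ for $0\le\epsilon<\epsilon_0$, and unwinding the rescaling gives $w_1^\ast=1+\OO(\epsilon)$ and $w_j^\ast=\epsilon^{j-1}v_j(\epsilon)=\OO(\epsilon^{j-1})$. Uniqueness within the class singled out in the theorem is then exactly the local uniqueness in the implicit function theorem.

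I expect the only real obstacle to be bookkeeping, namely checking that the boundary corrections to $\Delta$ at $j=1$ and $j=N$ contribute only higher-order terms in $\epsilon$ after rescaling, so that neither the leading-order recurrence nor the triangular structure of the Jacobian is disturbed. Since each such correction simply replaces one coupling term by $0$ or by a copy of $w_j$ itself, this is an essentially mechanical verification. No contraction-mapping or small divisor argument is needed, because the linearization at the seed is diagonal-dominant by construction.
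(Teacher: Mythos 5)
Your proposal is correct, and it reaches the decay estimate by a genuinely different mechanism than the paper. The paper proves this theorem as the $\phi=0$ special case of Theorem \ref{th:fixed2}: it applies the implicit function theorem directly in the \emph{unrescaled} variables at the seed $p^0=(1,0,\dots,0)$, where at $\epsilon=0$ the Laplacian drops out entirely and the Jacobian is the diagonal matrix with entries $3\delta_{i,1}-1$, i.e. $(2,-1,\dots,-1)$; the decay $p^*_j=\OO(\epsilon^{j-1})$ is then extracted \emph{a posteriori} from analyticity of the branch, by an inductive computation showing that the coefficient of $\epsilon^k$ in $p^*_j$ vanishes for $j\ge k+2$ (with $p^k_{k+1}=(-1)^k(1+\phi)^{-k}$). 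You instead build the decay into the ansatz via $w_j=\epsilon^{j-1}v_j$ and apply the implicit function theorem to the rescaled system, whose limiting Jacobian is lower bidiagonal with diagonal $(2,-1,\dots,-1)$ and hence invertible; your limiting recurrence $v_j+v_{j-1}=0$ with seed $(1,-1,1,\dots)$ matches the paper's explicit series $p_2^*=-\epsilon+\cdots$, $p_3^*=\epsilon^2+\cdots$. Your route gets the decay for free at the cost of the (routine) check that the rescaled system is smooth in $\epsilon$ and that the boundary modifications of $\Delta$ preserve the triangular structure; the paper's route has a trivial IFT step but needs the separate induction, and it generalizes immediately to the two-parameter family in $(\epsilon,\phi)$. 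One small caveat on the uniqueness clause: your IFT gives uniqueness only among $w$ with $w_j/\epsilon^{j-1}$ \emph{close to} $(-1)^{j-1}$, which is a narrower class than ``all real fixed points with $w_1^*=1+\OO(\epsilon)$ and $w_j^*=\OO(\epsilon^{j-1})$''; the unrescaled IFT yields uniqueness in a fixed neighborhood of $(1,0,\dots,0)$, which does cover the stated class once $\epsilon$ is small relative to the implied constants. This does not invalidate your argument, but if you want the uniqueness statement exactly as written you should either supplement the rescaled argument or run the IFT in the original variables as the paper does.
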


\begin{proof}  This theorem is a special case of Theorem \ref{th:fixed2}, and we delay the proof until
we state this theorem.
\end{proof}

Finding a periodic solution of the form \eref{e:wide} (\ie a fixed point in
the rotating coordinate system) is reduced
to finding roots of the system of equations 
\begin{equs}[e:noomega]
  0&= -\epsilon (\Delta p)_j -p_j+(q_j^2+p_j^2)p_j~,\\
  0 &= \epsilon (\Delta q)_j +q_j-(q_j^2+p_j^2)q_j~.
\end{equs}
One can easily compute, for fixed $N$ (not too large), that there is a solution
with $q^*_j=0$, and $p^*_j$  given as a series in $\epsilon$.
For example,  for  $N=3$ the series takes the form:
\begin{equa}[e:sol]
 {p_1^*}    &=1-{\frac {1}{2}}{ 
\epsilon}-{\frac {5}{8}}\epsilon^{2}-{\frac {21}{16}}{{
\epsilon}}^{3}+\OO (  \epsilon^{4}  )~,
\\
{p_2^*}   &=-\epsilon-{\frac {3}{2}}\epsilon^{2}
-{\frac {35}{8}}\epsilon^{3}+\OO (  \epsilon^{4}
  )~,\\
{p_3^*}   &=\epsilon^{2}
+{\frac {5}{2}}\epsilon^{3}+\OO   (\epsilon^{4}
  )~.
\end{equa}

\begin{remark} Since \eref{e:main}  is invariant under complex rotations $w_ j \to e^{i \theta} w_j$,
we actually have a circle of fixed points (when $\gamma=0$).  However, these are the only fixed points with $|w_1| \approx 1$.
\end{remark}

\begin{remark} Since the fixed points in Theorem \ref{th:fixed} are real valued, they correspond to
a fixed point of the system of equations \eref{e:main_real} of the form $(p^*,q^*=0)$.
\end{remark}

Numerics, for example for the case of $\epsilon=0.01$ and $N=4$, shows
that for initial conditions close to these states the periodic
solutions emerge naturally, and after a transient, the scaling of the
different components indeed matches those predicted by the theory, \ie
each oscillator has energy about $\epsilon ^2$ compared to the
previous one, see \fref{fig:1}.

\begin{figure}[ht]
\begin{center}\includegraphics[scale=1,angle=0]{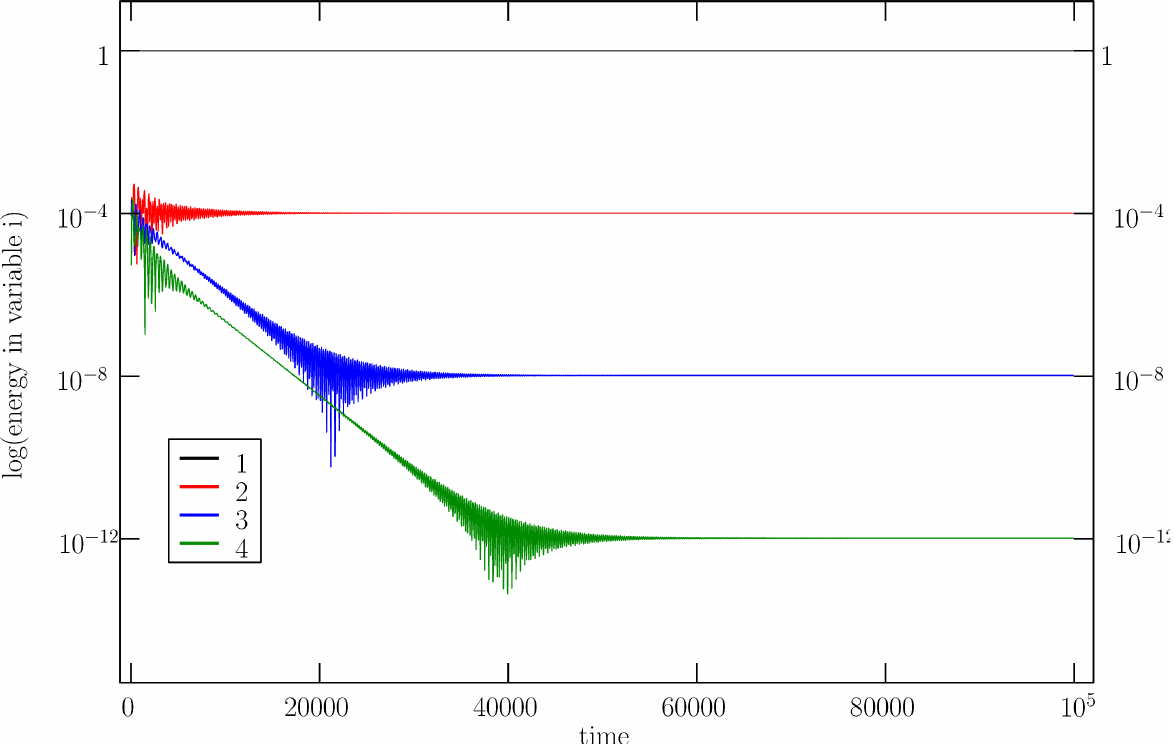}
\end{center}
\caption{Numerical illustration of the dynamics with (weak)
  dissipation. The parameters are $N=4$, $\gamma=0.2$, and $\epsilon
  =0.01$. Shown is the ``energy'' of the degree of freedom $i$,
  $i=1,\dots,4$. Note that the transients vanish after some time and
  then the energies settle at about $\epsilon ^{-2(i-1)}$. Here we
  define them as $p_i^2+q_i^2$. Note also that the dissipation is so
  slow that no decrease can be observed in the graph of $p_1^2+q_1^2$ over the time scale considered.
}\label{fig:1}

\end{figure}

We now show that there exists a family of periodic solutions near these fixed points.  We look for
solutions of the form $w_j(t;\phi) = e^{i \phi t} p_j^*(\phi)$, again with $p_j^*(\phi)$ real.  Note that
this notation is chosen so that the fixed point constructed above corresponds to the case $\phi = 0$.

\begin{theorem}\label{th:fixed2} Suppose that the damping coefficient
  $\gamma$ equals $0$ in
\eref{e:main}.  There exist constants $\epsilon_0 > 0$, $\phi_0 >0$, such that for 
$0 < \epsilon < \epsilon_0$ and $ |\phi| < \phi_0$, \eref{e:main} has a periodic solution
of the form $w_j(t;\phi) = e^{i t\phi } p_j^*(\phi)$, with $p_1^*(\phi) = 1+{\OO}(\epsilon, \phi)$,
and $p_j^*(\phi) = {\OO}(\epsilon^{j-1})$ for $j = 2, \dots, N$.
\end{theorem}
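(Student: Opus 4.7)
\medskip

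\noindent\textbf{Proof proposal.} The plan is to substitute the ansatz $w_j(t;\phi)=e^{it\phi}p_j^*(\phi)$ into \eref{e:main} with $\gamma=0$, reduce the problem to a time--independent algebraic system, and then reduce the $\phi$-dependent case to the $\phi=0$ case by a simple rescaling. Plugging in and using that $p_j^*(\phi)$ is real, the time factor $e^{it\phi}$ factors out and one obtains
\begin{equation}\label{e:plan-static}
-\epsilon(\Delta p^*)_j-(1+\phi)p_j^*+(p_j^*)^3=0,\qquad j=1,\dots,N.
\end{equation}
This is identical in form to \eref{e:noomega} except that the mass coefficient $1$ has been replaced by $1+\phi$, so it contains Theorem \ref{th:fixed} as the special case $\phi=0$.

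Next, I would use the rescaling $p_j^*(\phi)=\sqrt{1+\phi}\,\tilde p_j$ and introduce the effective small parameter $\tilde\epsilon=\epsilon/(1+\phi)$. A direct computation shows that \eref{e:plan-static} is then equivalent to
\begin{equation*}
-\tilde\epsilon(\Delta\tilde p)_j-\tilde p_j+\tilde p_j^3=0,\qquad j=1,\dots,N,
\end{equation*}
which is exactly the $\phi=0$ system \eref{e:noomega}. Choosing $\phi_0<1$, we have $\tilde\epsilon\le\epsilon/(1-\phi_0)$, so $\tilde\epsilon$ is small whenever $\epsilon$ is, and the two theorems are equivalent once the $\phi=0$ case is established.

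For the $\phi=0$ system I would invoke the implicit function theorem. At $\epsilon=0$ the equations decouple into $p_j(p_j^2-1)=0$, which has the isolated solution $p^{(0)}=(1,0,\dots,0)$. The Jacobian of the left side of \eref{e:plan-static} with respect to $p$ at $(\epsilon,\phi,p)=(0,0,p^{(0)})$ is diagonal with entries $(3(p^{(0)}_j)^2-1)_j=(2,-1,-1,\dots,-1)$, hence invertible. This produces a smooth branch of real solutions $p^*(\epsilon)$ with $p_1^*=1+\OO(\epsilon)$. To upgrade the crude bound $p_j^*=\OO(\epsilon)$ (for $j\ge 2$) to the sharp $p_j^*=\OO(\epsilon^{j-1})$, I would substitute the ansatz $p_j=\epsilon^{j-1}\xi_j$ for $j\ge 2$ into \eref{e:plan-static}; the powers of $\epsilon$ arising from $-\epsilon(\Delta p)_j$ and from the linear term $-p_j$ then balance, leaving a non--degenerate implicit--function problem for $(\xi_j)$ at $\epsilon=0$ (one reads off the recurrence $\xi_j^{(0)}=-\xi_{j-1}^{(0)}$ at leading order, with $\xi_1^{(0)}=1$). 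Once this is done for $\phi=0$, undoing the rescaling gives $p_1^*(\phi)=\sqrt{1+\phi}+\OO(\epsilon)=1+\OO(\epsilon,\phi)$ and $p_j^*(\phi)=\sqrt{1+\phi}\,\OO(\tilde\epsilon^{j-1})=\OO(\epsilon^{j-1})$, as required.

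I do not expect a serious obstacle here: the scaling trick eliminates the $\phi$-dependence, and the $\phi=0$ problem is a finite system of polynomial equations with a non--degenerate leading--order solution. The only part that requires a little care is the sharp power-counting $\OO(\epsilon^{j-1})$, which I would handle by the weighted ansatz above (or equivalently by iterating the recurrence order by order in $\epsilon$, checking that the boundary modification $(\Delta p)_N=p_{N-1}-2p_N$ at $j=N$ does not affect the scaling of interior components).
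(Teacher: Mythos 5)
Your proposal is correct, and its core coincides with the paper's proof: both reduce the ansatz to the algebraic system $-\epsilon(\Delta p)_j-(1+\phi)p_j+p_j^3=0$ (equation \eref{eq:fp} in the paper) and apply the implicit function theorem at the decoupled point $p^0=(1,0,\dots,0)$, where the Jacobian is $\mathrm{diag}(2,-1,\dots,-1)$. You differ in two sub-steps. First, the paper treats $(\epsilon,\phi)$ as a two-dimensional parameter in a single application of the implicit function theorem, whereas you eliminate $\phi$ by the rescaling $p=\sqrt{1+\phi}\,\tilde p$, $\tilde\epsilon=\epsilon/(1+\phi)$; this is a correct and tidy reduction, though it buys little here since the two-parameter version is no harder. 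Second, for the sharp decay $p_j^*=\OO(\epsilon^{j-1})$ the paper expands $p^*$ as a power series in $\epsilon$ (legitimate because the implicit-function-theorem solution is analytic in $(\epsilon,\phi)$) and shows by induction that $p^k_j=0$ for $j\ge k+2$ with $p^k_{k+1}=(-1)^k(1+\phi)^{-k}$; you instead substitute the weighted ansatz $p_j=\epsilon^{j-1}\xi_j$ and run a second implicit-function argument for $(\xi_j)$, whose Jacobian at $\epsilon=0$ is triangular with diagonal $(2,-1,\dots,-1)$. That also works, and your leading recurrence $\xi_j^{(0)}=-\xi_{j-1}^{(0)}$ reproduces the paper's coefficients; the one point worth making explicit is that the branch produced by the weighted argument coincides with the original one, which follows from local uniqueness in the first application since both branches tend to $p^0$ as $\epsilon\to0$. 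The paper's induction has the small advantage of delivering the explicit leading coefficients along the way; your version packages the power counting without an induction.
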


\begin{proof}

If we insert $w_j(t;\phi) = e^{i t\phi } p_j^*(\phi)$ into \eref{e:main}, and take real
and imaginary parts, we find that the amplitudes 
 $p^*$ of these periodic orbits are solutions of 
\begin{equ}\label{eq:fp}
F_j(p;\epsilon,\phi) = -\epsilon (\Delta p)_j - (1+\phi) p_j + p_j^3 = 0\ ,\ \ j = 1, \dots , N\ .
\end{equ}
Setting $p^0_j = \delta_{j,1}$, we have
\begin{equ}
F_j(p^0;0,0) = 0\ ,
\end{equ}
for all $j$.  Furthermore, the Jacobian matrix at this point is the diagonal matrix
\begin{equ}
\left( D_{p} F (p^0;0,0) \right)_{i,j} = (3 \delta_{i,1} -1) \delta_{i,j}~,
\end{equ}
which is obviously invertible.

Thus by the implicit function theorem, for $(\epsilon,\phi)$ in some neighborhood of the
origin, \eref{eq:fp} has a unique fixed point $p = p^*(\epsilon,\phi)$, and since $F$ depends
analytically on $(\epsilon,\phi)$, so does $p^*(\epsilon,\phi)$.

The decay of $p^*_j$ with $\epsilon$ follows from the analyticity of the fixed points.  Write
\begin{equ}
p^*_j(\epsilon,\phi) = p^0_j + \epsilon p^1_j(\phi) + \epsilon^2 p^2_j(\phi) \dots\ .
\end{equ}
Inserting this expansion into \eref{eq:fp} and collecting terms of order $\epsilon$ gives
\begin{equ}
(1+\phi)p^1_j - 3 (p^0_j)^2 p^1_j = -  (\Delta p^0)_j\ .
\end{equ}
From the form of $p^0$ we see:
\begin{enumerate}
\item $p^1_2(\phi) = -\frac{\epsilon}{1+\phi}$\ .
\item $p^1_j = 0\ , j \ge 3$.
\end{enumerate}
Continuing by induction we find that $p^k_{k+1} = (-1)^k \frac{1}{(1+\phi)^k}$, and
$p^k_j = 0 $ if $j \ge k+2$, from which the estimates claimed in the theorem follow.
\end{proof}

As in the case of the fixed points,  the invariance under complex rotations generates a two parameter family of periodic orbits
\begin{equ}\label{e:periodic}
w_j^*(t; \phi, \theta) = e^{i (t \phi  + \theta)} p_j^*(\phi)~.
\end{equ}

\begin{remark} Solutions similar to those considered here have been considered by many authors, including for the case of infinity many nodes.  See for example \cite{jenkinson:2016}.
\end{remark}

\section{A modulation approach to describe the motion along the periodic family}\label{s:drift}

In this section we derive equations that describe the motion of the
system near the family of periodic orbits we found above when the dissipation is included in the equations of motion.  We focus specifically
on the cases $N=2,3, {\text{ and }} 4$ sites in this section.
  We show that these orbits form a ``metastable manifold'' in the sense of \cite{beck:2010} in that once a trajectory comes close this set of states it remains near them for a very long time, moving along
the manifold of periodic solutions at a very slow rate, see \fref{fig:2}.   We calculate both the rate at which one drifts along this family, and the dissipation rate that this induces in the system and show that both of these predictions are in 
accord with our numerics.\begin{figure}[ht]
  \begin{center}
    \includegraphics[scale=0.44,angle=0]{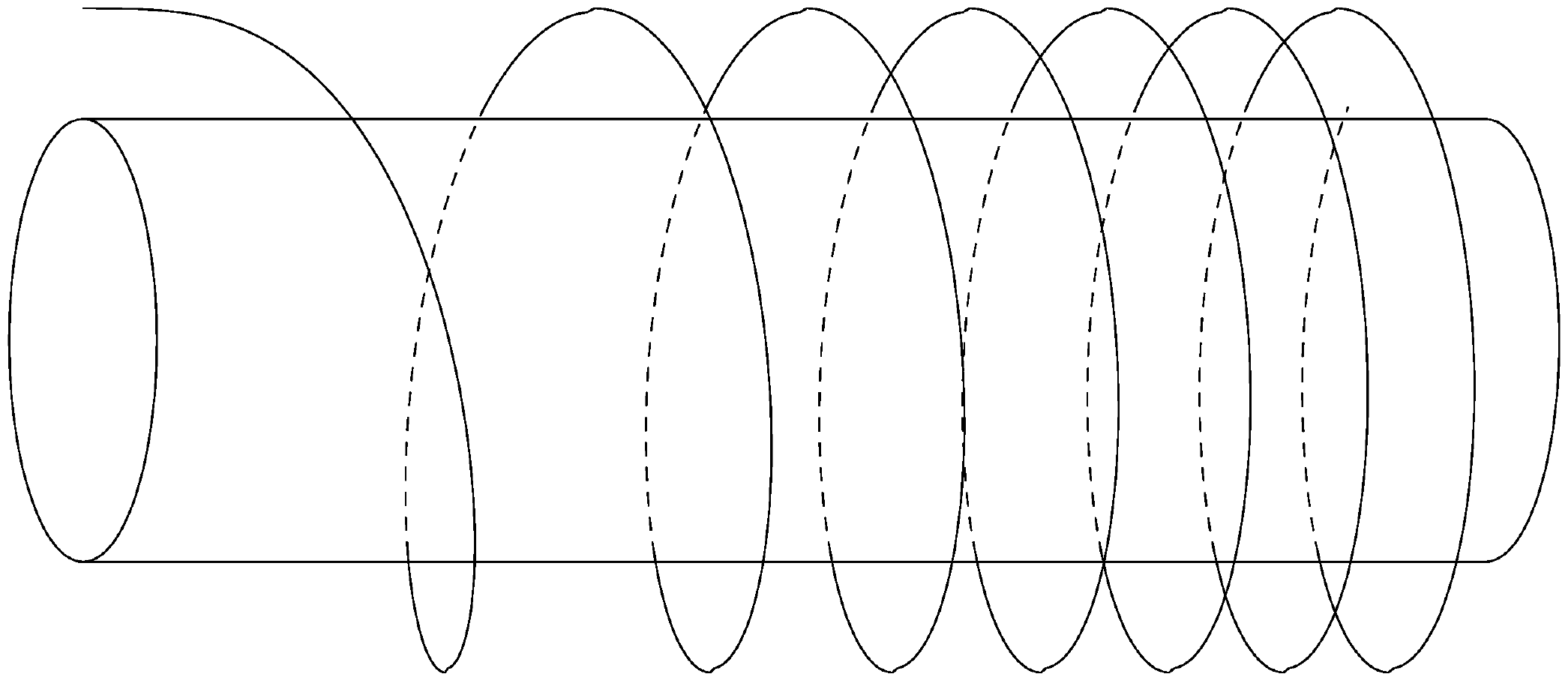}
  \end{center}
  \caption{The cylinder illustrates the set of periodic solutions of
    \eref{e:periodic}, with $\phi$ changing (very little) from left
    to right and the circle illustrating the angle $\theta$. The spiral
    illustrates the way a time-dependent solution of \eref{e:main}
    slides along the cylinder. It actually does not converge to it but
    will stay at some small, finite, distance from it. So the cylinder
    is Lyapunov stable in the sense of \cite{Arnold1989}, at
    least as long as $\epsilon $ stays small.
    }\label{fig:2}
\end{figure}

Consider the linearization of the system \eref{e:main_real} around
the periodic orbit
$w^*_j(t;\phi)  = e^{i t\phi } p_j^*(\phi)$.   
We continue with $N=3$. Setting\\ $X^*=(p_1^*,p_2^*,p_3^*,q_1^*,q_2^*,q_3^*)^{\rm T}$,
the linearization at $X^*$ takes the
form
$$
\frac{\d x}{\d t} = M x =  \begin{pmatrix}
  0&A\\B&0
  \end{pmatrix} x\ ,
  $$
with
\begin{equ}
  A=
  \begin{pmatrix}
    1-\epsilon -(p^*_1)^2 &\epsilon &0\\
    \epsilon&  1-2\epsilon-(p^*_2)^2 &\epsilon \\
    0&\epsilon&1-\epsilon -(p^*_3)^2
  \end{pmatrix}~,  
\end{equ}
where the $p^*_j$ are the solutions of \eref{e:noomega}.
Similarly,
\begin{equ}
B=
  \begin{pmatrix}
    -1+\epsilon +3(p_1^*)^2 &-\epsilon &0\\
    -\epsilon&  -1+2\epsilon +3(p_2^*)^2 &-\epsilon \\
    0&-\epsilon&-1+\epsilon +3(p_3^*)^2
  \end{pmatrix}~.
\end{equ}
Similar expressions hold for other values of $N$.

\begin{remark} By the covariance of the equations, similar formulas hold for
$\phi\sim0$ and arbitrary $\theta$, but we continue with $\phi=0$ and $\theta=0$.
\end{remark}

We next argue that the spectrum of $M$ consists of a double eigenvalue 0, and $2N-2$
imaginary eigenvalues, and we have verified this  for
$N=2,\dots,6$.

The discussion starts with $\epsilon =0$. Then, we have the ``hatted'' quantities
\begin{equ}\label{eq:Ahat}
\hat A=
\begin{pmatrix}
   0 &0 &0\\
    0&  1 &0 \\
    0&0&1
  \end{pmatrix}~,
\end{equ}
and
\begin{equ}\label{eq:Bhat}
 \hat B=
  \begin{pmatrix}
    2 &0 &0\\
    0&  -1 &0 \\
    0&0&-1
  \end{pmatrix}~.
\end{equ} 
We set 
\begin{equ}\label{eq:Mhat}
\hat M\equiv  \begin{pmatrix}
  0&\hat A\\\hat B&0
  \end{pmatrix}~,
\end{equ}
and study first the spectrum of $\hat M$. The spectrum of $M$ will
then be close to that of $\hat M$, and changing $\theta$ leaves the
spectrum unchanged.
The eigenvalues  of $\hat M$ are: A double 0, and $N-1$ pairs of eigenvalues
$\pm i$.
When $N=3$, the corresponding eigenvectors are (we write the vectors as rows):
\begin{equa}
  e^{(1)} &= (0,0,0,1,0,0)~,\\
  e^{(3),(4)} &= (0,\pm i,0,0,1,0)~,\\
  e^{(5),(6)} &= (0,0,\pm i,0,0,1)~.\\
\end{equa}
Note that $e^{(2)}$ is missing, but the vector $e^{(2)}=(1,0,0,0,0,0)$ is
mapped onto $2e^{(1)}$ and so $e^{(1)}$ and $e^{(2)}$ span the 0 eigenspace.

When $\epsilon \ne0$, the structure of eigenvectors and eigenvalues is similar to the one for $\epsilon=0$. 
Continuing with $N=3$, the null space of $M$ is spanned by
$$v^{(1)}=(0,0,0,p_1^*,p_2^*,p_3^*)~.$$ 
This comes from the invariance of \eref{e:dNLS}
under $u\to e^{i\theta}u$.  That is to say, we use the fact that $e^{i \theta} p^*_j$ is a fixed
point of the system of equations for all $\theta$, differentiate with respect to $\theta$, and set
$\theta = 0$.  Taking real and imaginary parts of the resulting equation shows that $v^{(1)}$ is
an eigenvector with eigenvalue zero.
The analog of $e^{(2)}$ is then found as follows:
From the form of $M$ we see that
\begin{equ}
  v^{(2)}=(x_1^*,x_2^*,x_3^*,0,0,0)~,
\end{equ}
with $x^*=(x_1^*,x_2^*,x_3^*)$ defined by
\begin{equ}
  x^*=B^{-1} p^*~,
\end{equ}
with $p^*=(p_1^*,p_2^*,p_3^*)$,
satisfies
\begin{equ}
  M v^{(2)} = v^{(1)}~.
\end{equ}
Note that $B^{-1}$ exists when $\epsilon $ is small enough.
Therefore, $M$ has an eigenvalue 0 and is of Jordan normal form, in
the subspace spanned by $v^{(1)}$, $v^{(2)}$. 
The vector $v^{(2)}$ is mapped by $M$ onto $v^{(1)}$, which means that $v^{(1)}$
and $v^{(2)}$ span the zero eigenspace.  As with $v^{(1)}$, $v^{(2)}$ comes
from the fact that we have a family of fixed points/period orbits, this time
depending on $\phi$.   As we showed in Theorem \ref{th:fixed2}, there is a family of fixed points of
\eqref{eq:fp} depending on $\phi$.  
Differentiating this equation with respect to $\phi$ and setting $\phi = 0$ leads
to the form of $v^{(2)}$.

The other eigenvalues are (of course) purely imaginary, 
and perturbation theory (for $N=3$) shows that they are all
simple for small enough $|\epsilon| $. If one studies the eigenvalues
of $M^2$, one finds that they are
\begin{equa}
\lambda _1&=  -1+0.76393\epsilon+0.95967477\epsilon^2+1.83724465\epsilon^3+\OO(\epsilon^4)~,\\
  \lambda _2&=-1+5.23606797\epsilon-3.95967477\epsilon^2+4.16275534829\epsilon^3+\OO(\epsilon ^4)~.
\end{equa} 
By analyticity, there is an open set of
$|\epsilon |$ where this is negative.
And the eigenvalues of $M$ are $\pm$ the square roots of
the $\lambda _k$, and so they are purely imaginary.

We next study what happens when one adds dissipation on the
coordinate $N$. In the standard basis, when $N=3$, the dissipation is
just given by $-\gamma\epsilon $ times a matrix $C$:
\begin{equ}
C=  \begin{pmatrix}
  0&0&0\\
  0&0&0\\
    0&0&1
  \end{pmatrix}~.
\end{equ}
In the full space, we have the $2N\times 2N$ matrix
\begin{equ}
  \widetilde C= 
  \begin{pmatrix}
    C&0\\0&C
  \end{pmatrix}~.
\end{equ}
Assume now that the eigenvalues are all simple. Then, for the
non-vanishing ones, one expects  that adding
dissipation of the form $\dot q_N=-\gamma\epsilon q_N$ and $\dot p_N =-\gamma\epsilon
p_N$ moves these eigenvalues into the left half plane, by an amount
proportional to $\gamma$. We illustrate this, numerically in
\fref{fig:5}, for $N=3$ with the two top lines.
\begin{figure}[ht]
\begin{center}\includegraphics[scale=1,angle=0]{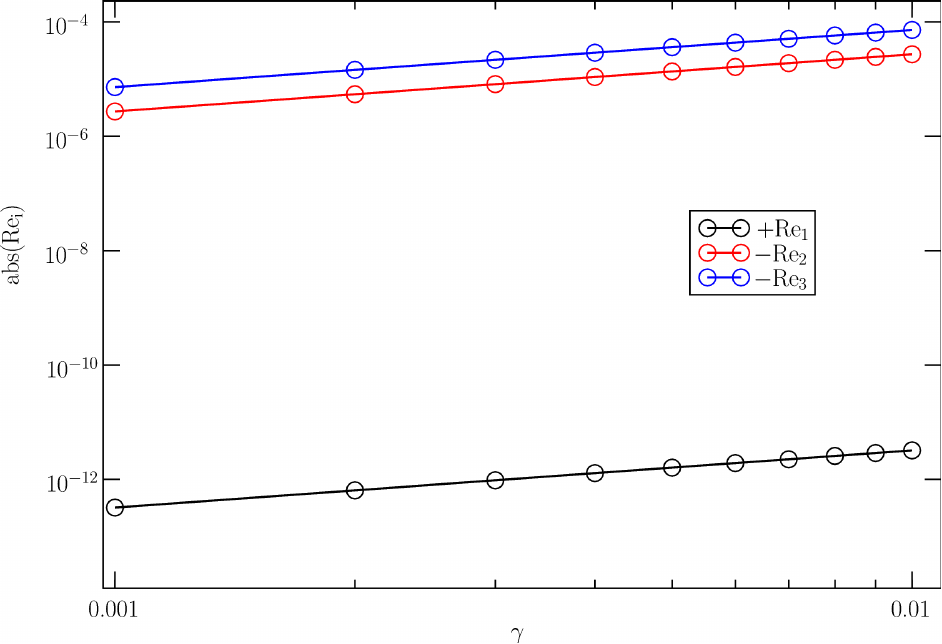}
\end{center}
\caption{The figure shows the $\gamma$ dependence of the real parts of the
eigenvalues, for $N=3$ and $\epsilon=0.01$. The three curves are linear with
intercept 0 and slopes $3.2\cdot 10^{-10}$, $0.0027$, and $0.00727$.
Note that the first eigenvalue has an extremely small \emph{positive}
real part, while the others are stable. 
}\label{fig:5}

\end{figure}
While we do not have a proof, numerical studies show clearly that the
spectrum, for $N=3,4$, when $\gamma>0$, has the following properties:
\begin{enumerate}
\item Eigenvalues which are non-zero when $\gamma =0$ acquire negative real
parts proportional to $\gamma$.  We have verified this for $N=2,3,4$ numerically.
\item The zero eigenvalues which appear when $\gamma =0$ remain very small when $\gamma$
is non-zero.  We address these eigenvalues using modulation theory, below.
\end{enumerate}

We illustrate these observations in \fref{fig:5}.   The real parts are
shown in \fref{fig:5}, where one sees that one (pair of) eigenvalues
has a \emph{positive} real part, which is very small, while the others have negative real
parts. The imaginary part of the first eigenvalue is about $1.3\cdot
10^{-5}$ when $N=3$ while the other eigenvalues have imaginary parts a
few percent less than 1 (not shown).

Our numerics indicate that as long as $\gamma$ is small, solutions remain near the 
 family of periodic orbits for very long times, even though
no true periodic orbits survive this perturbation.  We note that 
when $\gamma \ne 0$, $p^*(\phi)$ is no longer a fixed point of 
\eref{e:main_real}, so the relevance of the spectrum of $M-\gamma \epsilon \tilde{C}$ for
the dynamics of \eref{e:main_real} is not immediately apparent,
but our approach is similar to that of  \cite{promislow:2002} where as long as the ``drift''
along the family of periodic orbits is slow, we can ``freeze'' the coefficients in the
linearized vector field to understand
the dynamics over some finite interval. 

 To consider the evolution of solutions of \eref{e:main} (or equivalently, \eref{e:main_real}) for
initial conditions near the family of periodic orbits, we write the solution as
\begin{equ}\label{eq:modulation_hyp}
w_j(t) = e^{i (t\phi(t)  + \theta(t))} p^*_j(\phi(t)) + W_j(t)\ .
\end{equ}
To emphasize, if $\gamma =0$, we know that we have solutions of this form with $\phi$ and 
$\theta$ constant and $W_j(t) \equiv 0$ for all $t$. Motivated by the modulation theory approach
to understand perturbations of coherent structures (see,
{\it e.g.} \cite{pego:1992}, or \cite{promislow:2002}),
we split the various evolution equations. For this, we consider the adjoint
eigenvectors of $J\nabla H \bigr |_{x=X^*(\theta,\phi) }$,
corresponding to the  0 eigenspace.

Denote them $n^{(k)}(\theta,\phi )$ with $k=1,2$. They are defined by
\begin{equa}\label{eq:normalization}
  \langle n^{(1)},\partial _\theta X^*\rangle =\langle n^{(2)},\partial _\phi X^*\rangle&=1~,\\
  \langle n^{(1)},\partial _\phi X^*\rangle =\langle n^{(2)},\partial _\theta X^*\rangle&=0~.
\end{equa}
Due to the special and simple form of 
$
M = J \nabla H |_{x=X^*}
$,
we can write down the adjoint eigenvectors that span the zero subspace
very simply, at least when the parameter $\theta $ equals $0$.  As we found earlier, the zero eigenspace of $M$ is spanned by 
$ v^{(1)} = (0,0,0,p_1^*,p_2^*,p_3^*) = \partial_{\theta} X^*$ and 
$ v^{(2)} = (x_1^*,x_2^*,x_3^*,0,0,0) = \partial_{\phi} X^*$, and if we write out $M$ in its block form, we found $x^* = B^{-1} p^*$.

Because of the block form of $M$ and the fact that $A$ and $B$ are symmetric, we have
$$
M^{\dagger} = \left( \begin{array}{cc} 0 & B \\ A & 0 \end{array} \right)
$$
But then, since we know from the computation of the eigenvectors of $M$ that
$B p^* = 0$ (where $p^*$ is the vector corresponding to the first three components of $v^{(1)}$), we can
check immediately that
$$
\tilde{n}^{(2)} = (p_1^*,p_2^*,p_3^*,0,0,0)
$$
satisfies $M^{\dagger} \tilde{n}^{(2)} = 0$.  Likewise, 
$$
\tilde{n}^{(1)} = (0,0,0,x_1^*,x_2^*,x_3^*) = (0,B^{-1} p^*)
$$
satisfies
$$
M^{\dagger} \tilde{n}^{(1)} = (p_1^*,p_2^*,p_3^*,0,0,0) = n^{(2)}\ .
$$
Thus, $\tilde{n}^{(1)}$ and $\tilde{n}^{(2)}$ span the zero eigenspace of the adjoint operator. If we normalize these
vectors so that \eqref{eq:normalization} holds, we have $n^{(1)} = \nu_1 \tilde{n}^{(1)}$ and $n^{(2)} = \nu_2 \tilde{n}^{(2)}$
where in the case of $N=3$, the normalization constants can be shown
to satisfy $\nu_{j} = 2+\OO(\epsilon ) $, for $j = 1,2$, 
using the facts that $p_1^* = 1+ {\mathcal{O}}(\epsilon)$ (with all other components of $p^*_j$ of higher order in 
$\epsilon$) and that $B^{-1} = (\hat{B})^{-1} + {\mathcal{O}}(\epsilon)$, where $\hat{B}$ has the form \eqref{eq:Bhat}.
Thus, 
\begin{eqnarray}\label{eq:none}
n^{(1)} &=& 2 (0,0,0,x_1^*,x_2^*,x_3^*) + {\mathcal{O}}(\epsilon)\ , \\ \label{eq:ntwo}
n^{(2)} &=&  2 (p_1^*,p_2^*,p_3^*,0,0,0)+ {\mathcal{O}}(\epsilon)\ .
\end{eqnarray}

For $\theta \ne 0$,  the linearization of the vectorfield $J \nabla H |_{x=X^*}$ can be conjugated to the $\theta = 0$ case
by a rotation in the $(q_j,p_j)$ plane, so one can derive the corresponding eigenvectors by taking advantage of
this rotation.

%
%
%
%
%
%

We now assume that our initial conditions are such that we are close to one of the
periodic orbits of the undamped system, and we choose $\phi(0)$ and $\theta(0)$ so that 
$W(0)$ is orthogonal to the zero eigenspace of the linearization of the equations about this
orbit---that is, we choose  $\phi(0)$ and $\theta(0)$ so that
\begin{equ}
\langle n^{(1)}(\phi(0),\theta(0)) ~|~ W(0) \rangle = \langle n^{(2)}(\phi(0),\theta(0)) ~|~ W(0) \rangle =0\ .
\end{equ}
Inserting this form of the solution into \eref{e:main}, and expanding,
we find, writing $\phi$ for $\phi(t)$, $\theta $ for $\theta (t)$ and
$p_j^*(\phi)$ for the composition $p_j^*(\phi(t))$:
\begin{eqnarray}\nonumber\label{e:modulation} \nonumber
&&-i \dot{w}_j =\nonumber\\
&& (\phi + t \dot{\phi} + \dot{\theta})  \ipt p^*_j(\phi )
-i \dot{\phi} \ipt \partial_{\phi}  p^*_j(\phi ) - i \dot{W}_j\nonumber \\ \nonumber
&& \qquad = -\epsilon\bigl( \Delta  p^*(\phi )\bigr)_j \,\,\ipt - \ipt p^*_j(\phi )
 \\
&& \qquad \qquad  +|p^*_j(\phi)| ^2 \ipt p^*_j(\phi ) - \epsilon (\Delta W)_j
\\ \nonumber
&& \qquad -   W_j + \Bigl(2 |p^*_j(\phi )|^2 W_j + e^{2i(t\phi+\theta)} p^*_j(\phi )^2\, \overline{W}_j\Bigr)
\\ \nonumber && \qquad 
 -  i \epsilon \gamma \delta_{j,N} \ipt \Bigl(p^*_j(\phi ) + W_j(t)\Bigr )
+{\OO}(|W_j|^2)\ .
\end{eqnarray}

Recall that $p^*_j$ is a periodic orbit of the $\gamma=0$ equations, so
\begin{equ}
0 = - \epsilon(\Delta p^*)_j - (1+\phi) p^*_j + |p^*_j|^2 p^*_j\ .
\end{equ}
This leads to the cancellation of many terms in \eref{e:modulation} and we are left with
\begin{equa}[e:mod2]
(t \dot{\phi}(t) &+ \dot{\theta}) p^*_j - i \dot{\phi} (\partial_{\phi} p^*_j) - i \dot{W}_j\\ \nonumber
= &- \epsilon (\Delta W)_j -  W_j\\& +  \Bigl(2| p^*_j(\phi )|^2 W_j + e^{2i (t\phi  + \theta)} p^*_j(\phi )^2 \overline{W}_j\Bigr) 
\\  &+ i \epsilon \gamma \delta_{j,N} p_j^* + i \epsilon \gamma \delta_{j,N} W_j
+ {\OO}(|W_j|^2)\ .
\end{equa}

We have defined $W$ so that it lies (at least initially) in the rapidly 
damped subspace of the linearized equations,
 and hence we expect it to decay quickly to a small value and remain small.  We plan
to return to this point in future research (using the methods of \cite{promislow:2002}) but for the moment
we set $W \equiv 0$ and focus on how the parameters
$\phi(t),\theta(t)$ evolve---\ie how
we move along the family of periodic orbits, assuming
that we remain very close to it.  If we set $W \equiv 0$, in \eref{e:mod2} we are left
with
\begin{equ}\label{eq:modpq}
(t \dot{\phi}(t) + \dot{\theta}) p^*_j - i \dot{\phi} (\partial_{\phi} p^*_j) 
=   i \epsilon \gamma \delta_{j,N} p_j^* ~.
\end{equ}
To isolate the equations for $\dot{\theta}$ and $\dot{\phi}$ we separate the real and imaginary
parts of this equation, and rewrite it in vector form as $X = (p,q)$, where as above, $q$ represents
the imaginary part of the complex vector, and $p$ its real part.   We also recall that 
the vectors $v^{(1)}$ and $v^{(2)}$ which span the zero eigenspace of the linearization
of the differential equation are written in this imaginary/real decomposition as
$v^{(1)} = (0,0,0,p^*_1,p^*_2,p^*_3)$ and 
$v^{(2)} = ((\partial_{\phi} p^*)_1,(\partial_{\phi} p^*)_2,(\partial_{\phi} p^*)_3,0,0,0)$.  Thus, 
if we multiply \eqref{eq:modpq} by ``i'', we see that in
the case of $N=3$, \eqref{eq:modpq} is
equivalent to 
\begin{equ}
(t \dot{\phi} + \dot{\theta}) v^{(1)}  + \dot{\phi} v^{(2)} = (0,0,-\epsilon\gamma p_3^*,0,0,0)\ .
\end{equ}
Applying the projection operators $\bra{ n^{(1)}} $ and 
$\bra{ n^{(2)}}$ to this equation we obtain:
\begin{equa}
&(t \dot{\phi}(t) + \dot{\theta})  =\,0~, \\
&\dot{\phi}  = 
\,-\epsilon \gamma  p_3^* (n^{(2)})_3 \approx - 2 \epsilon \gamma  (p_3^*)^2 
\approx   - 2 \epsilon^5  \gamma  \ ,
\end{equa}
where the next to last equality used the formula for $n^{(2)}$ derived in \eqref{eq:ntwo} and 
the last equality used the form of $p^*_3$ derived in Theorem \ref{th:fixed2}.  Here, the approximate
equalities mean that we have retained only the lowest order terms in $\epsilon$ in these expressions.

From this we obtain
\begin{equa}
\phi(t) &=  -2  \gamma \epsilon^5 t~, \\
\theta(t) &=  \gamma \epsilon^5 t^2~.
\end{equa}

Note that this gives us an easy way to check the progress along the family of periodic states.
Recall that our modulation hypothesis implies that
\begin{equ}\label{e:mod_approx}
w_j(t) \approx e^{i (t\phi(t)  + \theta(t))} p_j^*(\phi(t))
= \exp({ - i  \gamma \epsilon^5 t^2) }\cdot p_j^*(\phi(t))\ .
\end{equ}
One particularly easy thing to check numerically is the point at which the first component of the 
computed solution is purely real (\ie when $q_1(t) = 0$).
If we denote the $k^{\rm th}$ time at which this occurs as $T_k$, then from
\eref{e:mod_approx} we see that
\begin{equ}\label{eq:crossing_time}
  \gamma \epsilon^5 T_k^2=  2\pi k\ .
\end{equ}

From this, we see immediately that 
\begin{equ}
\frac{T_{k+1} }{T_k} = \sqrt{ \frac{k+1}{k}}\ .
\end{equ}

\begin{figure}[ht]
  \begin{center}
    \includegraphics[width=0.7\textwidth,angle=270]{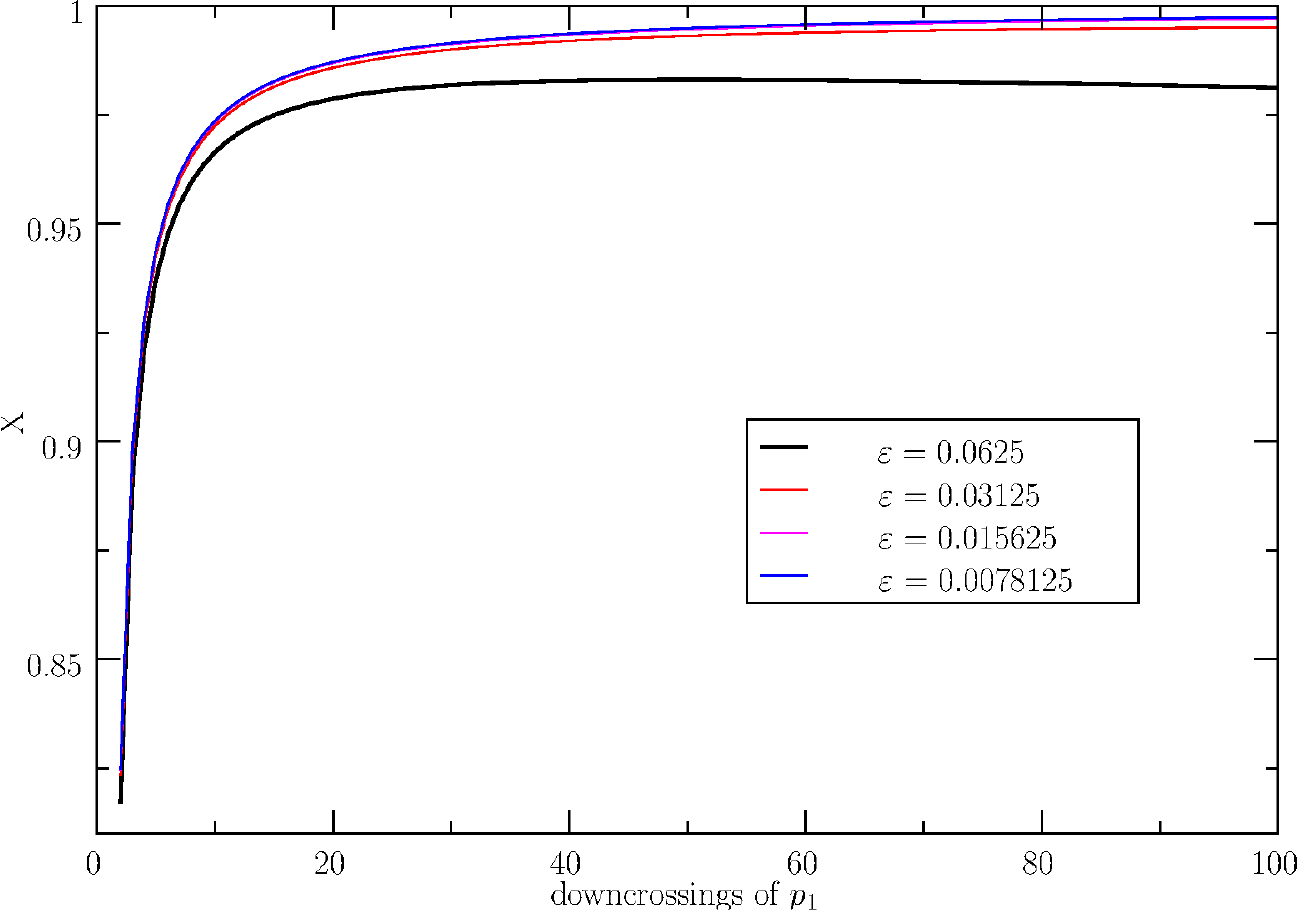}
  \end{center}
  \caption{This graph illustrates the behavior of $p_1(t)$, for $N=3$
    and several values of $\epsilon $. and $\gamma=0.2\epsilon $. One
    measures the downcrossing times $T_k$ of the $k^{\rm th}$ downcrossing
    of $p_1$ through 0.  The theory predicts
that\newline $X=\bigl({T_k/T_{k-1}}-1\bigr)/(\sqrt{k/(k-1)}-1) = 1$.  As noted in the text,
the transient behavior is not yet understood. }\label{fig:3}
  \end{figure}
Numerical computations of $T_k$ exhibited in \fref{fig:3} show clearly this
scaling with $\sqrt{k}$ indicating that our modulation hypothesis is capturing 
the drift along the family of periodic orbits.  However, \eqref{eq:crossing_time} indicates that this
rotation of the phase should begin immediately, while our numerics indicates that there is some initial
transient, and the $\sqrt{k}$ scaling appears only after some time.
We are not yet sure what the origin
of this transient behavior is, and plan to investigate it further in future work.

As a second check of the validity of this scenario we compute the decay of
energy in the system as captured by the $\ell^2$ norm of the solution.
Define
\begin{equ}
\EE(t) = \frac{1}{2} \sum_{j=1}^N (p_j(t)^2+q_j(t)^2)\ .
\end{equ}
\begin{figure}[t!]
  \begin{center}
    \includegraphics[width=0.7\textwidth,angle=270]{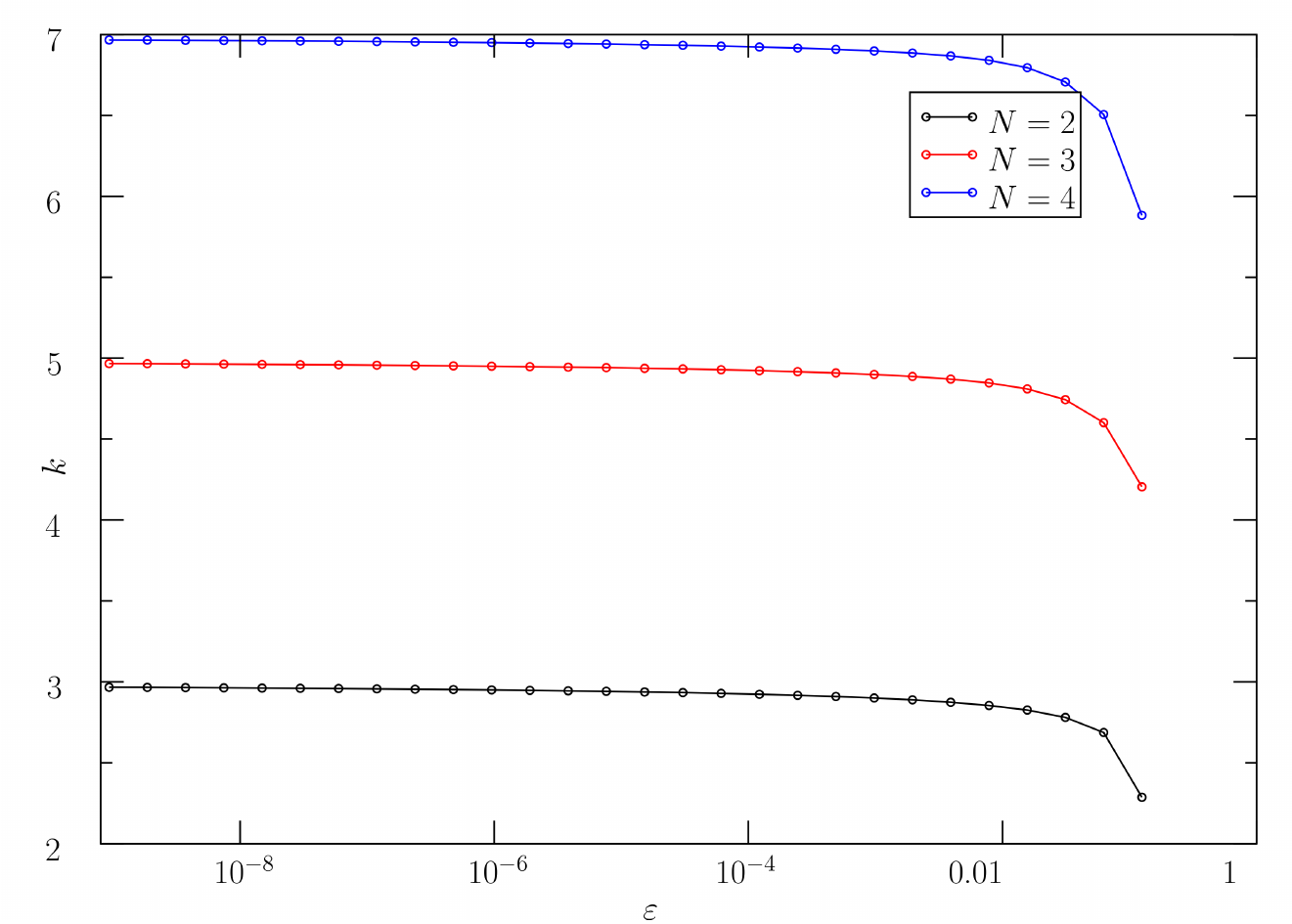}
  \end{center}
  \caption{This graph illustrates the decay properties of the $\ell_2$
    norm as a function of time, for various values of $N$ and
    $\epsilon$. 
The horizontal axis is $\epsilon $ and the vertical axis is an
    estimate of the decay rate, obtained as follows: If $m_t$ is the
    $\ell_2$ norm at time $t$ and $m_{t'}$ that at time $t'$, then we
    compute
 $   k=k(\epsilon )=\log\left( \frac{\log(m_t/m_{t'})}{\gamma\cdot
      (t'-t)}\right)/\log (\epsilon ). 
$
If $m_t$ decays like $\exp(-ct\gamma\epsilon^s)$, then the calculation will
  lead to $k=s$. Indeed, we see that the decay rate is $\epsilon
  ^{2N-1}$. The calculations shown were done for $\gamma=0.2$.
}\label{fig:4}
\end{figure}

The undamped $\gamma =0$ dynamics preserve this energy.  If we differentiate with respect
to time, the only contribution comes from the damping applied to the
$N^{\rm th}$ mode and we
find
\begin{equ}
\dot{\EE}(t) = - \gamma \epsilon (p_N(t)^2 + q_N(t)^2)~.
\end{equ}

If we again appeal to our modulation hypothesis that the motion remains close to the periodic
orbit, we can use the values of $(p_N(t),q_N(t))$ from 
\begin{equ}
w_t(t) \approx e^{i (t\phi(t)  + \theta(t))} p_j^*(\phi(t))
= \exp( -i \gamma \epsilon^5 t^2 )\cdot p_j^*(\phi(t))\ .
\end{equ}
The complex phase makes no contribution and we are left with
\begin{equ}
\dot{\EE}(t) = - \gamma \epsilon (p_N^*(\phi(t)))^2\ .
\end{equ}
In the case of $N=3$ modes, this leads to
\begin{equ}
\dot{\EE}(t) = -\gamma \epsilon^5 + \OO(\epsilon^6)\ .
\end{equ}
Once again, we check this numerically in \fref{fig:4}.

\section{Conclusions}  In a simple model, we have shown that one can compute a family of breather solutions perturbatively.
We have then shown the link of these breathers to the extremely slow rate of energy dissipation observed in such
lattices of coupled nonlinear oscillators, by using a modulation hypothesis to compute the rate at which trajectories
drift along the family of breathers in the limit of small dissipation, and we have then linked the predictions of the modulation
theory to the rate of energy dissipation.  The predictions of the theory agree very well with numerical experiments performed
on this system.
\medskip
\section*{Acknowledgments}We thank No\'e Cuneo for very helpful
discussions related to these systems.
\null

\bibliographystyle{unsrt}
\bibliography{NLS}
\medskip
Received xxxx 20xx; revised xxxx 20xx.
\medskip
\end{document}